\begin{document}

\newcommand{\mmbox}[1]{\mbox{${#1}$}}
\newcommand{\proj}[1]{\mmbox{{\mathbb P}^{#1}}}
\newcommand{\affine}[1]{\mmbox{{\mathbb A}^{#1}}}
\newcommand{\Ann}[1]{\mmbox{{\rm Ann}({#1})}}
\newcommand{\caps}[3]{\mmbox{{#1}_{#2} \cap \ldots \cap {#1}_{#3}}}
\newcommand{\N}{{\mathbb N}}
\newcommand{\Z}{{\mathbb Z}}
\newcommand{\R}{{\mathbb R}}
\newcommand{\K}{{\mathbb K}}
\newcommand{\p}{{\mathbb P}}
\newcommand{\A}{{\mathcal A}}
\newcommand{\CC}{{\mathcal C}}
\newcommand{\C}{{\mathbb C}}
\newcommand{\CR}{C^r(\hat P)}

\newcommand{\Tor}{\mathop{\rm Tor}\nolimits}
\newcommand{\ann}{\mathop{\rm ann}\nolimits}
\newcommand{\Ext}{\mathop{\rm Ext}\nolimits}
\newcommand{\Hom}{\mathop{\rm Hom}\nolimits}
\newcommand{\im}{\mathop{\rm Im}\nolimits}
\newcommand{\rank}{\mathop{\rm rank}\nolimits}
\newcommand{\supp}{\mathop{\rm supp}\nolimits}
\newcommand{\arrow}[1]{\stackrel{#1}{\longrightarrow}}
\newcommand{\CB}{Cayley-Bacharach}
\newcommand{\coker}{\mathop{\rm coker}\nolimits}
\sloppy
\newtheorem{defn0}{Definition}[section]
\newtheorem{prop0}[defn0]{Proposition}
\newtheorem{conj0}[defn0]{Conjecture}
\newtheorem{thm0}[defn0]{Theorem}
\newtheorem{lem0}[defn0]{Lemma}
\newtheorem{corollary0}[defn0]{Corollary}
\newtheorem{example0}[defn0]{Example}

\newenvironment{defn}{\begin{defn0}}{\end{defn0}}
\newenvironment{prop}{\begin{prop0}}{\end{prop0}}
\newenvironment{conj}{\begin{conj0}}{\end{conj0}}
\newenvironment{thm}{\begin{thm0}}{\end{thm0}}
\newenvironment{lem}{\begin{lem0}}{\end{lem0}}
\newenvironment{cor}{\begin{corollary0}}{\end{corollary0}}
\newenvironment{exm}{\begin{example0}\rm}{\end{example0}}

\newcommand{\defref}[1]{Definition~\ref{#1}}
\newcommand{\propref}[1]{Proposition~\ref{#1}}
\newcommand{\thmref}[1]{Theorem~\ref{#1}}
\newcommand{\lemref}[1]{Lemma~\ref{#1}}
\newcommand{\corref}[1]{Corollary~\ref{#1}}
\newcommand{\exref}[1]{Example~\ref{#1}}
\newcommand{\secref}[1]{Section~\ref{#1}}
\newcommand{\poina}{\pi({\mathcal A}, t)}
\newcommand{\poinc}{\pi({\mathcal C}, t)}
\newcommand{\std}{Gr\"{o}bner}
\newcommand{\jq}{J_{Q}}

\title {Piecewise polynomials on polyhedral complexes}

\author{Terry Mcdonald}
\address{McDonald: Mathematics Department \\ Midwestern State University \\
  Wichita Falls \\ TX 76308\\ USA}
\email{ terry.mcdonald@mwsu.edu}

\author{Hal Schenck}
\thanks{Schenck supported by  NSF 0707667, NSA H98230-07-1-0052}\address{Schenck: Mathematics Department \\ University of Illinois Urbana-Champaign\\
  Urbana \\ IL 61801\\ USA}
\email{schenck@math.uiuc.edu}

\subjclass[2000]{Primary 41A15, Secondary 13D40, 52C99} \keywords{polyhedral
  spline, dimension formula, Hilbert polynomial.}

\begin{abstract}
\noindent For a $d$-dimensional polyhedral complex $P$, the dimension
of the space of piecewise polynomial functions (splines) on $P$ of 
smoothness $r$ and degree $k$ is given, for $k$ sufficiently large,
by a polynomial $f(P,r,k)$ of degree $d$. When $d=2$ and $P$ is
simplicial, in \cite{as} Alfeld and Schumaker give a formula for 
all three coefficients of $f$. However, in the polyhedral case,
no formula is known. Using localization techniques and specialized
dual graphs associated to codimension--$2$ linear spaces, we obtain
the first three coefficients of $f(P,r,k)$, giving a complete
answer when $d=2$.
\end{abstract}
\maketitle


\section{Introduction}\label{sec:intro}
In \cite{b}, Billera used methods of homological
and commutative algebra  to solve a conjecture of Strang 
on the dimension of the space of $C^1$ splines on a generic planar simplicial
complex. The algebraic approach to the study of splines was further 
developed in work of Billera and Rose \cite{br}, \cite{br1}, and Schenck and 
Stillman \cite{ss1}, \cite{ss2}.
For a (not necessarily generic) triangulation $\Delta$ of a simply connected 
polygonal domain having $f_1^0$ interior edges and $f_0^0$ interior vertices, 
Alfeld and Schumaker showed that for $k\ge 3r+1$, the dimension of the 
vector space $C^r_k(\Delta)$ of splines on $\Delta$ having 
smoothness $r$ and degree at most $k$ is given by:
\begin{thm}[\cite{as1}]
\vskip .1in
\[
\dim C^r_k(\Delta) = {k+2 \choose 2} + {k-r+1 \choose 2}f^0_1 - \left( {k+2 \choose 2} - {r+2 \choose
    2}\right)f^0_0 + \sigma,
\]
\vskip .1in
\noindent where $\sigma = \sum\sigma_i$, $\sigma_i = \sum_j \max\{
(r+1+j(1-n(v_i))), 0 \}$, 
and $n(v_i)$ is the number of distinct slopes at an interior
vertex $v_i$. 
\end{thm}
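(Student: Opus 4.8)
The plan is to set up the algebraic framework of Billera–Rose: realize $C^r_k(\Delta)$ as the degree-$k$ graded piece of a module over the polynomial ring in three variables, obtained by homogenizing the triangulation (coning over $\Delta$ to get a fan $\hat\Delta$ in $\R^3$). Concretely, one considers the chain complex $\mathcal{R}/\mathcal{J}$ whose terms record the polynomial ring on top faces, on interior edges (with the ideal generated by the appropriate power $\ell_e^{r+1}$ of the linear form vanishing on edge $e$), and on interior vertices. Billera's identification shows $C^r(\hat\Delta)$ is the top homology of this complex, and since the domain is simply connected the lower homology modules are controlled.

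First I would compute the Euler characteristic of the Billera–Rose complex in each degree $k$, which immediately yields the ``combinatorial'' part of the formula: the term ${k+2\choose 2}$ from the single $3$-dimensional cone, the contribution ${k-r+1\choose 2}f^0_1$ from each interior edge (the Hilbert function of $\R/\langle \ell_e^{r+1}\rangle$ in degree $k$), and the subtraction $\bigl({k+2\choose 2}-{r+2\choose 2}\bigr)f^0_0$ coming from the interior vertices, where the ${r+2\choose 2}$ correction is the dimension of $\R/\mathfrak{m}_v^{r+1}$-type data localized at $v$. The Euler characteristic alone is not the dimension, however: one must correct by the dimensions of the homology modules $H_1$ and $H_0$ of the complex. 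For $k\ge 3r+1$ one argues these vanish except for a local contribution at each interior vertex.

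The heart of the argument—and the main obstacle—is the vertex-local computation producing $\sigma_i$. At an interior vertex $v_i$ one restricts attention to the star of $v_i$ and analyzes the ideal $J_i$ generated by the forms $\ell_e^{r+1}$ over edges $e$ through $v_i$; this lives in a polynomial ring in two variables (after quotienting by the linear form dual to $v_i$), where the relevant homology is the cokernel of a map of free modules whose presentation is governed by the cyclic ordering of the $n(v_i)$ distinct slopes around $v_i$. The dimension of this cokernel in degree $k$ is exactly $\sum_j \max\{r+1+j(1-n(v_i)),\,0\}$: each ``jump'' of a new slope forces a syzygy, and the $\max$ with $0$ reflects that once $k$ is large the contribution stabilizes. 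I would handle this by an explicit resolution in $\K[x,y]$, using the fact that powers of distinct linear forms in two variables form an almost complete intersection whose syzygies are understood (this is the Schenck–Stillman local analysis).

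Finally I would assemble the pieces: $\dim C^r_k(\Delta)$ equals the Euler characteristic computed in step one, plus $\sum_i \dim(H^{\mathrm{loc}}_i)_k = \sigma$, provided the global $H_1$ vanishes in degree $k$. The vanishing of global $H_1$ for $k\ge 3r+1$ follows from the simply connected hypothesis together with a degree bound on the generators of the syzygy module; here one invokes that the relevant $\Tor$ modules are supported at the vertices, so no further global correction appears. Combining these gives precisely the stated formula, valid in the range $k\ge 3r+1$.
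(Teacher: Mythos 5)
The theorem you are proving is not proved in this paper at all: it is quoted from Alfeld--Schumaker \cite{as1}, whose original argument is a Bernstein--B\'ezier construction (minimal determining sets), so your homological route is genuinely different from the cited source. It is, however, exactly the route by which the present paper's machinery recovers the formula asymptotically: the Billera--Rose sequence (Lemma \ref{lem:BR}), or equivalently the complex with vertex terms $R/J(v)$, $J(v)=\langle l_\tau^{r+1} : v\in\tau\rangle$, gives $\dim C^r_k(\Delta)$ as an Euler characteristic plus corrections by $H_0$ and $H_1$; simple connectivity forces $H_0=0$ and $H_1$ of finite length, the local resolution of Lemma \ref{lem:syzlines} (Schumaker, Schenck--Stillman) shows $R/J(v)$ has constant Hilbert polynomial ${r+2\choose 2}+\sigma_i$, and the Euler relation $f_2-f_1^0+f_0^0=1$ then yields the displayed formula for $k\gg 0$, as in Corollary \ref{cor:planarHP} and Section 4.1. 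One bookkeeping correction: $\sigma_i$ is not the dimension of a local homology module; it is the excess $\dim(R/J(v))_k-{r+2\choose 2}$, so it already sits inside the Euler characteristic once the vertex term is $R/J(v)$ rather than the $R/\mathfrak{m}_v^{r+1}$-type term you describe. Your sketch mixes these two complexes; this is repairable but should be fixed.

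The genuine gap is the threshold $k\ge 3r+1$. Your assertion that vanishing of the global $H_1$ in this range ``follows from the simply connected hypothesis together with a degree bound on the generators of the syzygy module'' is precisely the hard point and is not justified: simple connectivity only gives that $H_1(\mathcal{R}/\mathcal{J})$ is a finite-length module, and bounding the top degree in which it is nonzero amounts to a Castelnuovo--Mumford regularity bound for $C^r(\hat\Delta)$, which the concluding remarks of this very paper list as an open question. So as written your argument proves the formula for all sufficiently large $k$ --- a weaker statement than the cited theorem, whose sharp range $k\ge 3r+1$ is obtained in \cite{as1} by explicit Bernstein--B\'ezier arguments rather than by this homological approach.
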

In higher dimensions, a spectral sequence argument \cite{s1} 
shows that the previous theorem continues to 
hold, in the sense that for $k \gg 0$, the dimension of 
$C^r_k(\Delta)$ is given by a polynomial in $k$ (the {\em Hilbert
polynomial}), and the coefficients of the three largest 
terms of this polynomial are determined by a higher dimensional
analog of the formula given by Alfeld and Schumaker in the planar case.

In this note, we study splines on a {\em polyhedral} complex $P$. 
In general, polyhedral complexes do not lend
themselves to the full range of techniques available in the simplicial
case, and so have been studied less. In \cite{schu1}, Schumaker 
obtains upper and lower bounds in the planar case, and in \cite{br}, 
Billera and Rose obtained the first two coefficients of the Hilbert
polynomial. It is possible to consider all the $C^r_k(P)$ at
once by passing to a module over a polynomial ring, 
and in \cite{y} Yuzvinsky uses sheaves on 
posets to obtain results on the freeness of this module. In \cite{r1},
\cite{r2}, Rose studies cycles on the dual graph of $P$. Our key 
technical innovation is a refined version of the dual graph, depending
on a choice of codimension--$2$ linear space; used in conjunction with
localization techniques.

The main result of this paper is a formula for the third 
coefficient of the Hilbert polynomial, in the case of a polyhedral
complex. In particular, for a planar polyhedral complex, our
result gives a formula for the dimension of $C^r_k(P)$ for
all $k\gg 0$.
\begin{exm}\label{exm:first}
Let $P$ be the polygonal complex depicted below:
\begin{figure}[h]
\begin{center}
\epsfig{file=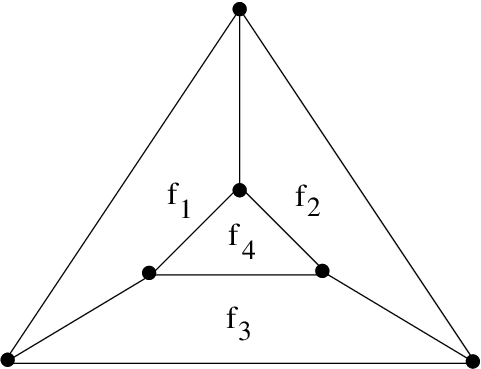,height=1.3in,width=1.5in}
\end{center}
\end{figure}

\noindent The following table gives the dimensions for $C^r_k(P)$ for small values
of $r$, assuming $k \gg 0$. For a planar $P$, Corollary
\ref{cor:planarHP} gives a formula for $f(P,r,k)$ depending
on the combinatorics and geometry of $P$, ``explaining'' the 
values below.
\vskip .1in
\begin{center}
\begin{supertabular}{|c|c|} 
\hline $r$ & $\dim_{\mathbb{R}} C^r_k(P)$ \\ 
\hline $0$ & $2k^2+2$ \\ 
\hline $1$ & $2k^2-6k+10$ \\ 
\hline $2$ & $2k^2-12k+32$ \\ 
\hline $3$ & $2k^2-18k+64$ \\ 
\hline $4$ & $2k^2-24k+110$ \\ 
\hline
\end{supertabular}
\end{center}
\vskip .1in
\end{exm}
Throughout the paper, our basic references are de Boor \cite{dB} (for
splines) and  Eisenbud \cite{E} (for commutative algebra).
\section{Algebraic preliminaries}
\begin{defn}\label{defn:dualG}
The dual graph $G_P$ of a $d$--dimensional polyhedral complex $P$ is a graph
whose vertices correspond to the $d$--faces of $P$, with an edge 
connecting two vertices iff the corresponding two $d$--faces meet 
in a $(d-1)$--face.
\end{defn}
\begin{defn}\label{defn:star}
The star of a face $\sigma$ of $P$ is the set of all faces $\tau \in
P$ such that there exists a face $\upsilon \in P$ containing both
$\tau$ and $\sigma$. 
\end{defn}
Put another way, the star of $\sigma$ is the smallest 
subcomplex of $P$ containing all faces which contain $\sigma$.
Let $P$ be a $d$-dimensional polyhedral complex embedded in $\mathbb{R}^d$,
such that $P$ is {\em hereditary}; this means for every nonempty face
$\sigma$ of $P$, the dual graph of the star of $\sigma$ is connected.
Basically, $P$ should be visualized as a polyhedral subdivision of a 
manifold with boundary. A $C^r$--{\it spline} on $P$ is a piecewise 
polynomial function (a polynomial is assigned to each $d$-dimensional
cell of $P$), such that two polynomials supported on $d$-faces 
which share a common $(d-1)$--face $\tau$ meet with order of
smoothness $r$ along that face. 
We use $P^0_i$ to denote the set of interior $i$ faces of $P$ 
(all $d$--dimensional faces are considered interior), and let
$f^0_i= |P^0_i| $. The set of splines of degree at most $k$ (i.e. each
individual polynomial is of degree at most $k$) is a vector space, which
we will denote $C^r_k(P)$.

\subsection{Splines and syzygies}
The first important fact is that 
the smoothness condition is local: for two $d$--cells 
$\sigma_1$ and $\sigma_2$ sharing a common 
$(d-1)$--face $\tau$, let $l_\tau$ be
a nonzero linear form vanishing on $\tau$. Billera and Rose
(\cite{br}, Corollary 1.3) show that a pair of polynomials
$f_i$ supported on $\sigma_i, i=1,2$ meet with order $r$ 
smoothness along $\tau$ iff
$$l_\tau^{r+1} | f_1-f_2. $$
In \cite{schu}, Schumaker gave a formula for the dimension of
$C^r_k(P)$ in the planar case, when $P$ is the
star of a vertex, as depicted in Example \ref{exm:syz} (note
that such a $P$ will always be simplicial).
Schumaker observed that a spline is a syzygy on
the ideal generated by the $r+1^{st}$ powers of linear forms vanishing
on the interior edges. To see this, consider the example below:

\begin{exm}\label{exm:syz} A planar $P$ which is the star of a single interior vertex $v_
0$ at the origin.
\begin{center}
\epsfig{figure=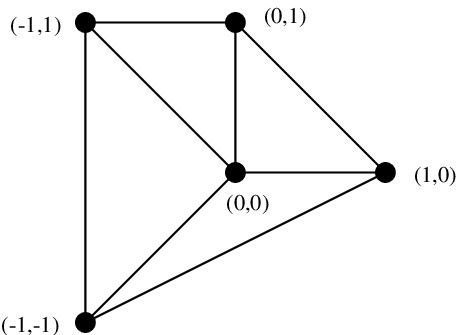,height=1.5in,width=2.0in}
\end{center}

Beginning
with the simplex in the first quadrant and moving clockwise, label
the polynomials on the triangles $f_1,\ldots, f_4$. To obtain a
global $C^r$ function, we require
\begin{center}
$\begin{array}{ccc}
a_1 y^{r+1}&=&f_1-f_2\\
a_2 (x-y)^{r+1}&=&f_2-f_3\\
a_3 (x+y)^{r+1}&=&f_3-f_4\\
a_4 x^{r+1}&=&f_4-f_1
\end{array}$
\end{center}
Summing each side above yields the equation
$\sum_{i=1}^4a_i l_i^{r+1} = 0$, a syzygy on the ideal
$$I = \langle y^{r+1}, (x-y)^{r+1}, (x+y)^{r+1}, x^{r+1} \rangle.$$
It is easy to see that the process can be reversed. The upshot is
that if $P$ is the star of a vertex, then the spline module consists
of $\mathbb{R}[x,y] \oplus syz(I)$; the summand $\mathbb{R}[x,y]$ corresponds
to the ``trivial'' splines $f_1=f_2=f_3=f_4$. Hence, splines are
intimately connected to commutative algebra.
\end{exm}
\subsection{Graded modules, Hilbert polynomial and series}
As first observed by Billera and Rose, one way to study the 
dimension of the vector space $C^r_k(P)$ is to embed $P$ in the hyperplane 
$x_{d+1} = 1 \subseteq \mathbb{R}^{d+1}$, and form the cone $\hat P$
over $P$, with vertex at the origin. If $C^r_k(\hat P)$ is the 
set of splines on $\hat P$ such that each polynomial 
is homogeneous of degree $k$, then Billera and Rose show (\cite{br},
Theorem 2.6) there 
is a vector space isomorphism between $C^r_k(\hat P)$ and $C^r_k(P)$. 
It is easy to see that the set of splines {\em of all degrees} 
\[
C^r(\hat P) = \bigcup\limits_{k \ge 0}C^r_k(\hat P)
\]
is a graded module over the polynomial ring $R$ in $d+1$ variables. 
It follows from basic commutative algebra (see \cite{sch}) that 
the dimension of all the graded pieces of
$C^r(\hat P)$ is encoded by the {\em Hilbert series}:
\[
HS(C^r(\hat P),t) = \sum_k \mbox{dim}C^r_k(\hat P)t^k = \frac{g(t)}{(1-t)^{d+1}};
\]
for some $g(t) \in \mathbb{Z}[t]$, and that for $k\gg 0$, the 
dimension of $C^r_k(P)$ is given by the {\em Hilbert
  polynomial} $HP(C^r(\hat P),k)$, which is an element of 
$\mathbb{Q}[k]$ of degree $d$.
An easy induction shows that 
\[
HS(R,t) = \frac{1}{(1-t)^{d+1}} \mbox{  and  }
  HP(R,k) = {d+k \choose d}
\]
We need one last bit of notation: $R(-i)$ will denote the polynomial 
ring $R$, considered as a graded module over $R$, but with generator 
in degree $i$: 
\[
HS(R(-i),t) = \frac{t^i}{(1-t)^{d+1}} \mbox{  and  } HP(R(-i),k) =
{d+k-i\choose d}.
\] 
\begin{lem}\label{lem:BR}$[$Billera-Rose, \cite{br}$]$
Let $P$ be a $d$--dimensional polyhedral complex. Then
there is a graded exact sequence:
\[
0\longrightarrow \CR \longrightarrow R^{f_d}\oplus
R^{f_{d-1}^0}(-r-1) \stackrel{\phi}{\longrightarrow}
R^{f_{d-1}^0}
\longrightarrow N \longrightarrow
0
\]
\[
 \hbox{where  } \phi = \;\;{\small \left[ \partial_d \Biggm| \begin{array}{*{3}c}
l_{\tau_1}^{r+1} & \  & \  \\
\ & \ddots & \  \\
\ & \ & l_{\tau_m}^{r+1}
\end{array} \right]}
\]
\end{lem}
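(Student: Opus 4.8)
The plan is to build the four--term exact sequence directly from the local-to-global description of splines, following the pattern already illustrated in Example~\ref{exm:syz}. First I would fix an orientation/ordering of the interior $(d-1)$--faces $\tau_1,\ldots,\tau_m$ (so $m=f^0_{d-1}$) and of the $d$--faces, and for each interior $\tau_j$ separating $d$--faces $\sigma_a,\sigma_b$ choose the linear form $l_{\tau_j}$ vanishing on $\tau_j$. The map $R^{f_d}\to R^{f^0_{d-1}}$ given by the (oriented) simplicial-type boundary matrix $\partial_d$ sends a tuple of polynomials $(f_1,\ldots,f_{f_d})$ to the tuple of pairwise differences $(f_a-f_b)_j$ along each interior face. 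By the Billera--Rose criterion (\cite{br}, Corollary~1.3) quoted above, such a tuple is a $C^r$ spline precisely when $l_{\tau_j}^{r+1}\mid (f_a-f_b)$ for every $j$, i.e.\ when the difference tuple lies in the image of the diagonal map $R^{f^0_{d-1}}(-r-1)\to R^{f^0_{d-1}}$ with entries $l_{\tau_j}^{r+1}$. This is exactly the statement that $\CR$ is the kernel of $\phi$, and it shows left exactness at $\CR$ and exactness at $R^{f_d}\oplus R^{f^0_{d-1}}(-r-1)$ once one checks that $\partial_d$ and the diagonal block have no common ``spurious'' kernel --- here the hereditary/connectedness hypothesis on $P$ enters, guaranteeing $\ker\partial_d$ consists only of globally constant tuples, so a kernel element of $\phi$ forces the $R^{f^0_{d-1}}(-r-1)$ coordinate to be a genuine syzygy contributing nothing new.

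Next I would verify gradedness: give the $j$--th copy of $R$ in the middle summand $R^{f^0_{d-1}}(-r-1)$ its generator in degree $r+1$, so that multiplication by $l_{\tau_j}^{r+1}$ is degree--preserving, while $\partial_d$ is visibly degree $0$; hence $\phi$ is a graded map of degree $0$ and $\CR=\ker\phi$ inherits a grading whose degree--$k$ piece is $C^r_k(\hat P)$, matching the Hilbert-series discussion above. Finally, defining $N:=\coker\phi$ makes the sequence exact at $R^{f^0_{d-1}}$ and at $N$ by construction. The only real content beyond bookkeeping is the exactness at the middle term, and the main obstacle is precisely controlling $\ker\partial_d$: one must argue that a collection of polynomials assigning the same value across every shared interior facet is constant on $\hat P$, which is where ``$P$ hereditary'' (equivalently, connectedness of the dual graph of $\hat P$, and of stars) does the work --- without it the sequence can fail to be exact on the left. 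I would isolate that point as the one lemma needing care and treat the rest as direct verification of the maps.
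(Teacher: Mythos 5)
The paper gives no argument of its own here (the lemma is quoted from Billera--Rose \cite{br}), so your proposal must stand on its own, and its overall skeleton is the right one: map $\CR$ into the middle term, observe that the facet-wise divisibility criterion identifies $\CR$ with $\ker\phi$, set $N=\coker\phi$, and check degrees so that the sequence is graded. However, you mislocate the one point that actually needs care. Exactness at the middle term has nothing to do with $\ker\partial_d$ consisting of ``globally constant tuples,'' and the claim that without such control ``the sequence can fail to be exact on the left'' is false. If $\phi(F,a)=0$, then for each interior face $\tau_j$ one has $(\partial_d F)_j=\mp a_j\,l_{\tau_j}^{r+1}$, so $F$ satisfies the facet-wise smoothness conditions and hence lies in $\CR$, and the coefficient vector $a$ is \emph{uniquely} determined by $F$ because $R$ is a domain and each $l_{\tau_j}\neq 0$ (the diagonal block $D$ is injective). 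Thus the map $\CR\to R^{f_d}\oplus R^{f_{d-1}^0}(-r-1)$, which you should write down explicitly as $F\mapsto (F,a(F))$, is injective with image exactly $\ker\phi$, with no hypothesis on $\ker\partial_d$ and no ``common spurious kernel'' to rule out; connectivity of the dual graph of $P$ itself is irrelevant to this step (a disconnected dual graph would only enlarge $\CR$, not break exactness).

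Where the hereditary hypothesis genuinely enters is earlier, in the step you pass over by citing Corollary 1.3 of \cite{br}: that corollary is a statement about \emph{two} cells sharing a $(d-1)$-face, and one needs the hereditary condition (connectedness of dual graphs of stars of lower-dimensional faces) to conclude that a piecewise polynomial which is $C^r$ across every interior $(d-1)$-face is globally $C^r$ on $\hat P$, i.e.\ that the analytically defined $\CR$ coincides with the module cut out by the facet-wise conditions. So the content you should isolate is that equivalence (global smoothness versus codimension-one conditions), not any claim about constancy of elements of $\ker\partial_d$; with that correction the rest of your argument, including the grading bookkeeping, goes through as you describe.
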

Write $[\partial_d \mid D]$ for $\phi$. To describe $\partial_d$, note 
that the rows of $\partial_d$ are indexed by $\tau \in P_{d-1}^0$. 
If $\sigma_1, \sigma_2$ denote the $d-$faces adjacent to $\tau$, 
then in the row corresponding to
$\tau$ the smoothness condition means that the only nonzero entries
occur in the columns corresponding to $\sigma_1, \sigma_2$, and are
$\pm(+1,-1)$. When $P$ is simplicial, 
$\partial_d$ is the top boundary map in the (relative) chain complex. 

A main result of \cite{br} is that $N$ is supported on 
primes of codimension at least two. Combining this with the exact 
sequence of Lemma \ref{lem:BR} allows Billera and Rose to 
determine the first two coefficients of the Hilbert polynomial
(though the result is phrased in terms of the Hilbert series).

\section{Main Theorem}
We begin by sketching our strategy. It 
follows from additivity of the Hilbert polynomial on exact sequences
and Lemma \ref{lem:BR} that obtaining
the coefficient of $k^{d-2}$ in the Hilbert polynomial of $\CR$
is equivalent to obtaining the coefficient of $k^{d-2}$ in the 
Hilbert polynomial of $N$. Since
\[
N \simeq (\!\!\!\bigoplus\limits_{\tau \in P_{d-1}^0} R/l_{\tau}^{r+1})/\partial_d,
\]
every element of $N$ is torsion. Using localization, we first
show that the codimension--$2$ associated primes of $N$ must be
{\em linear}, then give a precise description of which codimension--$2$ 
linear primes actually occur. This leads to an 
explicit description of the submodule of $N$ supported in codimension--$2$. 
Elements of this submodule are the only elements of $N$ which contribute
to the $k^{d-2}$ coefficient of the Hilbert polynomial, and the
formula follows.
\subsection{The codimension two associated primes of $N$}
\begin{lem}\label{lem:linear1}
Any codimension--$2$ prime ideal $J$ associated to $N$ contains a
linear form $l_\tau$, for some $\tau \in P^0_{d-1}$.
\end{lem}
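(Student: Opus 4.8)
The plan is to localize $N$ at an arbitrary codimension--$2$ associated prime $J$ and show that if $J$ contains none of the $l_\tau$, then $N_J=0$, which contradicts $J$ being associated. Recall from \lemref{lem:BR} that
\[
N \simeq \Bigl(\bigoplus_{\tau \in P_{d-1}^0} R/l_\tau^{r+1}\Bigr)\big/ \im \partial_d,
\]
so $N$ is generated by the images of the standard basis vectors $e_\tau$, one for each interior $(d-1)$--face $\tau$. Suppose $J$ is a codimension--$2$ prime with $l_\tau \notin J$ for every $\tau \in P_{d-1}^0$. Then each $l_\tau$ is a unit in the local ring $R_J$, so $(R/l_\tau^{r+1})_J = R_J/(l_\tau^{r+1}) = 0$. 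Hence the localized direct sum $\bigl(\bigoplus_\tau R/l_\tau^{r+1}\bigr)_J$ vanishes, and since localization is exact it commutes with taking the cokernel, giving $N_J = 0$. But a prime $J$ associated to $N$ must satisfy $N_J \neq 0$ (indeed $\depth_J N_J = 0$, so in particular $N_J$ is nonzero). This contradiction shows $l_\tau \in J$ for some $\tau$.

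The one point that needs care is the interaction of localization with the presentation: localization is exact, so applying $(-)_J$ to the four--term sequence of \lemref{lem:BR} (or just to the right--exact piece $R^{f_{d-1}^0}(-r-1)\oplus R^{f_d} \to R^{f_{d-1}^0}\to N\to 0$, after noting that the relevant summands of $N$ come only from the $\bigoplus_\tau R/l_\tau^{r+1}$ factor) yields the presentation of $N_J$ as a cokernel of the localized map, so $N_J$ is computed from $\bigl(\bigoplus_\tau R/l_\tau^{r+1}\bigr)_J$. Since each cyclic summand localizes to zero, so does the whole module, and a fortiori its quotient $N_J$.

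I expect the main obstacle to be purely expository rather than mathematical: one must be slightly careful that the module $N$ in \lemref{lem:BR} is the cokernel of $\phi=[\partial_d\mid D]$, and the columns of $\partial_d$ contribute only relations, not generators, to $N$; thus the generators of $N$ are exactly the classes of the $e_\tau$'s in $\bigoplus_\tau R/l_\tau^{r+1}$, and the vanishing of each $R_J/(l_\tau^{r+1})$ is genuinely enough to kill $N_J$. Once that is observed, the argument is the standard fact that the associated primes of a module lie in the support, combined with the computation of the support of a direct sum of cyclic modules $R/l_\tau^{r+1}$, whose support is $\bigcup_\tau V(l_\tau)$; any codimension--$2$ prime in this support must contain some $l_\tau$, and only the codimension--$2$ ones among these can be associated.
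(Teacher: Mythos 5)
Your argument is correct and is essentially the paper's own proof: both use the presentation of $N$ as a quotient of $\bigoplus_{\tau} R/l_\tau^{r+1}$ from Lemma~\ref{lem:BR}, observe that if $J$ contains no $l_\tau$ then every $l_\tau$ is a unit in $R_J$ so $N_J=0$, and conclude since an associated prime must lie in the support. Your extra remarks about exactness of localization and the generators of $N$ just make explicit what the paper leaves implicit.
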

\begin{proof}
From the description 
\[
N \simeq \big( \!\!\!\bigoplus\limits_{\tau \in P_{d-1}^0} R \big/ l_{\tau}^{r+1} \big) \big/ \partial_d,
\]
it follows that if no $l_\tau$ is in $J$, then all the $l_\tau$ are
invertible in $R_J$, so that $N_J$ vanishes.
\end{proof}

\begin{lem}\label{lem:main2}
Let $\xi$ be a codimension--$2$ linear space. 
If $\sigma \in P_d$ has at most one facet whose linear span contains $\xi$, 
then every generator of $N$ corresponding to a facet of 
$\sigma$ is mapped to zero in the localization $N_{I(\xi)}$.
\end{lem}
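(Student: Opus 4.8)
The plan is to read off a concrete presentation of $N$ from Lemma~\ref{lem:BR} and then exploit the elementary fact that localizing at the prime $I(\xi)$ inverts exactly those linear forms $l_\tau$ which do not vanish on $\xi$. \emph{Step 1 (presentation of $N$).} Since $N=\coker(\phi)$ with $\phi=[\partial_d\mid D]$ and $D$ the diagonal matrix with entries $l_{\tau_i}^{r+1}$, the module $N$ is generated by classes $v_\tau$, one for each $\tau\in P^0_{d-1}$, subject to the relations $l_\tau^{r+1}v_\tau=0$ (from the columns of $D$) together with, for every $d$-face $\sigma$, the relation $\sum_\tau \pm v_\tau=0$ in which the sum runs over the interior facets $\tau$ of $\sigma$ and the signs are the nonzero entries of the column of $\partial_d$ indexed by $\sigma$. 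These last relations are precisely the syzygy relations illustrated in Example~\ref{exm:syz}; in particular $v_\tau$ occurs in the $\sigma$-relation with coefficient $\pm1$ whenever $\tau$ is an interior facet of $\sigma$.

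\emph{Step 2 (killing the non-special generators).} Localize at $I(\xi)$. A linear form $l_\tau$ lies in $I(\xi)$ exactly when $l_\tau$ vanishes on $\xi$, i.e.\ when the linear span of $\tau$ contains $\xi$; in every other case $l_\tau$, hence also $l_\tau^{r+1}$, is a unit in $R_{I(\xi)}$. Therefore, for each interior facet $\tau$ of $\sigma$ whose linear span does \emph{not} contain $\xi$, the relation $l_\tau^{r+1}v_\tau=0$ forces $v_\tau=0$ in $N_{I(\xi)}$. By the hypothesis on $\sigma$, this already disposes of every generator of $N$ attached to a facet of $\sigma$ except possibly the single distinguished facet $\tau_0$ whose span contains $\xi$.

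\emph{Step 3 (the special facet).} If $\sigma$ has no facet whose span contains $\xi$, or if $\tau_0$ lies on the boundary of $P$ (so $\tau_0\notin P^0_{d-1}$ and $v_{\tau_0}$ is not a generator of $N$ at all), there is nothing further to prove. Otherwise $\tau_0\in P^0_{d-1}$, and I invoke the $\sigma$-relation from Step~1: after localizing, every summand of that relation other than the $v_{\tau_0}$ term has been killed in Step~2, so the relation collapses to $\pm v_{\tau_0}=0$ in $N_{I(\xi)}$, and since $\pm1$ is a unit we conclude $v_{\tau_0}=0$ in $N_{I(\xi)}$ as well. I do not expect any genuine obstacle here; the only point requiring care is the bookkeeping in Step~3 — verifying that $\tau_0$, when interior, really does contribute a unit-coefficient term to the $\sigma$-relation, and recognizing that the hypothesis ``at most one facet whose span contains $\xi$'' is exactly what guarantees that all the other terms of that relation were already eliminated in Step~2.
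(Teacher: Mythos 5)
Your proposal is correct and follows essentially the same route as the paper: use the diagonal relations $l_\tau^{r+1}v_\tau=0$, which become unit relations after localizing at $I(\xi)$, to kill every generator attached to a facet of $\sigma$ whose span does not contain $\xi$, and then use the $\pm1$ entries of the column of $\partial_d$ indexed by $\sigma$ to kill the (at most one) remaining generator. Your Step 3 merely spells out the bookkeeping (boundary versus interior distinguished facet) that the paper's terser proof leaves implicit.
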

\begin{proof}
In $R_{I(\xi)}$, any $l_\tau$ such that $\xi \not\subseteq V(l_\tau)$ becomes 
invertible. As $N$ is the cokernel of 
\[
\phi = \;\;{\small \left[ \partial_d \Biggm| \begin{array}{*{3}c}
l_{\tau_1}^{r+1} & \  & \  \\
\ & \ddots & \  \\
\ & \ & l_{\tau_m}^{r+1}
\end{array} \right],}
\]
in the right hand submatrix $D$ of $\phi$, all the forms 
(or all save one) $l_\tau^{r+1}$ such that 
$\tau$ is a facet of $\sigma$ become units. As the column 
of the left hand ($\partial_d$) matrix corresponding to $\sigma$ 
has nonzero entries only in rows corresponding to facets of $\sigma$,
this means that every generator corresponding to a facet 
of $\sigma$ has zero image in the localization, and the 
result follows.
\end{proof}
\begin{thm}\label{thm:linear2}
Any codimension--$2$ prime ideal $J$ associated to $N$ is of the form 
$\langle l_{\tau_1},l_{\tau_2} \rangle$ for $\tau_i \in P_{d-1}^0$
such that $V(l_{\tau_1}, l_{\tau_2})$ has codimension--$2$.
\end{thm}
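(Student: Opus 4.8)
The plan is to combine the previous two lemmas: Lemma~\ref{lem:linear1} tells us that an associated prime $J$ of codimension $2$ must contain some linear form $l_\tau$, and Lemma~\ref{lem:main2} is the key leverage to force a *second* linear form into $J$. The strategy is to suppose $J$ is associated to $N$, pick $\tau_1 \in P^0_{d-1}$ with $l_{\tau_1} \in J$, and set $\xi = V(J)$ (or rather, argue after localizing at $I(\xi)$ where $\xi$ is a codimension-2 linear space contained in $V(l_{\tau_1})$). We want to show that $J$ is generated by two of the $l_\tau$'s. The natural first step is to localize: since $J$ is associated to $N$, $N_J \neq 0$, so the localization $N_{I(\xi)}$ is nonzero for $\xi = V(J)$ if $J$ is itself the vanishing ideal of a linear space — but we don't yet know $J$ is linear, only that it contains one linear form. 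So the first real step is to argue that $J$, being associated and containing the linear form $l_{\tau_1}$, the quotient $R/J$ still has a nonzero localization, and we study which facets can survive.

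**Next I would** apply Lemma~\ref{lem:main2} contrapositively. Localize $N$ at $I(\xi)$ where $\xi$ is a codimension-2 linear space with $I(\xi) \subseteq J$ (such a $\xi$ exists containing $V(l_{\tau_1})$ appropriately — here one uses that $J$ has codimension $2$). Since $J$ is associated to $N$, one shows $N_{I(\xi)} \neq 0$ (associated primes localize well: $J \in \operatorname{Ass}(N)$ and $I(\xi) \subseteq J$ means, after passing to $R_{I(\xi)}$, $J R_{I(\xi)}$ is still associated, hence $N_{I(\xi)} \neq 0$). By the contrapositive of Lemma~\ref{lem:main2}, there must exist at least one $\sigma \in P_d$ with \emph{two} distinct facets $\tau_1', \tau_2'$ whose linear spans both contain $\xi$. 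Since $\tau_1', \tau_2'$ are interior $(d-1)$-faces with $\xi \subseteq V(l_{\tau_i'})$, we get $\langle l_{\tau_1'}, l_{\tau_2'}\rangle \subseteq I(\xi) \subseteq J$, and $V(l_{\tau_1'}, l_{\tau_2'}) \supseteq \xi$ has codimension exactly $2$ (it can't be bigger since $\tau_1' \neq \tau_2'$ are distinct facets of the same $d$-cell $\sigma$, so their linear spans are distinct hyperplanes, forcing $l_{\tau_1'}, l_{\tau_2'}$ to be linearly independent). Finally, since $\langle l_{\tau_1'}, l_{\tau_2'}\rangle$ is a codimension-$2$ prime contained in the codimension-$2$ prime $J$, they must be equal: $J = \langle l_{\tau_1'}, l_{\tau_2'}\rangle$.

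**The main obstacle** I anticipate is the clean justification that $N_{I(\xi)} \neq 0$ for the \emph{right} choice of codimension-2 linear space $\xi$ — specifically, producing a codimension-2 linear $I(\xi)$ sitting inside $J$ and knowing the localization stays nonzero. The subtlety is that a priori $J$ need not be linear, so $V(J)$ need not be a linear space; one has to argue that the linear form $l_{\tau_1}\in J$ together with codimension-2-ness lets us find a linear $\xi \subseteq V(l_{\tau_1})$ with $I(\xi) \subseteq J$ and $N_{I(\xi)}\neq 0$ (using that associated primes are preserved under localization at primes they contain, and $N_{I(\xi)} \ne 0$ precisely because some associated prime of $N$ — namely $J$ itself — contains $I(\xi)$). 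Once that is in hand, Lemma~\ref{lem:main2} does the heavy lifting: it is exactly designed so that a nonvanishing localization forces a $d$-cell with two facets spanning hyperplanes through $\xi$, and the rest is the standard fact that a prime containing a prime of the same (finite) codimension equals it. I would also need to double-check the edge case where \emph{all but one} of the relevant $l_\tau^{r+1}$ become units in Lemma~\ref{lem:main2}: that lemma already handles "at most one facet," so the contrapositive genuinely yields "at least two facets," which is what we need.
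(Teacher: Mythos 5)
Your overall architecture (Lemma~\ref{lem:linear1} supplies one form, Lemma~\ref{lem:main2} should force a second, and a codimension count then gives equality of primes) is the right one, but the step you yourself flag as the main obstacle is a genuine gap, and the mechanism you propose for it does not work. You want a codimension--$2$ \emph{linear} prime $I(\xi)\subseteq J$ at which to localize, and you justify $N_{I(\xi)}\ne 0$ by saying that the associated prime $J$ \emph{contains} $I(\xi)$. The localization facts go the other way: $\operatorname{Ass}(N_P)=\{QR_P : Q\in\operatorname{Ass}(N),\ Q\subseteq P\}$, and $N_P\ne 0$ exactly when $P$ \emph{contains} an associated prime of $N$. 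So to force $N_{I(\xi)}\ne 0$ from $J\in\operatorname{Ass}(N)$ you would need $J\subseteq I(\xi)$; since both primes have codimension $2$, that forces $J=I(\xi)$, i.e.\ you would already be assuming $J$ is linear, which is the point at issue. Moreover, the existence of such a $\xi$ is not automatic: a codimension--$2$ prime containing one linear form need not contain two independent ones --- e.g.\ $\langle x, y^2+z^2\rangle\subseteq\mathbb{R}[x,y,z]$ is a homogeneous prime of codimension $2$ whose only linear forms are the multiples of $x$. So the second form $l_{\tau_2}$ cannot be produced along the route you describe.

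The repair is to localize at $J$ itself; nothing in that step requires $J$ to be linear. Since $J\in\operatorname{Ass}(N)$, we have $N_J\ne 0$. If $J$ did not contain two of the $l_\tau$ cutting out a codimension--$2$ space, then by Lemma~\ref{lem:linear1} the only $l_\tau$ lying in $J$ are proportional to a single one, so in $R_J$ all the diagonal entries of $D$ except those coming from that one hyperplane become units; since distinct facets of a fixed $\sigma\in P_d$ span distinct hyperplanes, each $\sigma$ has at most one facet whose form is a non-unit, and the argument of Lemma~\ref{lem:main2} (which only uses that the relevant entries are units, not that the localizing prime is linear) kills every generator of $N_J$, contradicting $N_J\ne 0$. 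This yields two independent forms $l_{\tau_1},l_{\tau_2}\in J$, and then your closing step is fine: $\langle l_{\tau_1},l_{\tau_2}\rangle$ is a codimension--$2$ prime contained in the codimension--$2$ prime $J$, hence equal to it. This is exactly the paper's argument, run as a contradiction at $J$ rather than at an auxiliary $\xi$.
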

\begin{proof}
If there do not exist two $l_i$ as above, then by Lemma
\ref{lem:linear1}, $V(J)$ is contained in exactly one hyperplane which is the linear span of $\tau \in
P_{d-1}^0$. Thus, in $R_J$, all but one of the $l_\tau$ become units,
and the proof of Lemma \ref{lem:main2} shows that $N_J$ vanishes.
\end{proof}

Theorem \ref{thm:linear2} gives an explicit set of candidates for the
codimension two primes of $N$. As noted earlier, in \cite{br},
Billera and Rose showed that all associated primes of $N$ have 
codimension at least two (this also follows from the arguments 
above), so the theorem identifies 
all candidates for the associated primes of minimal codimension.
In order to determine exactly which codimension two linear primes are
actually associated to $N$, we introduce a certain dual graph, which
depends not just on the combinatorics of $P$, but also on geometry. In
the simplicial case, the codimension two associated primes are 
exactly the vertices of $P$. It turns out that in the polyhedral
case, the geometry is much more subtle.

\subsection{The graph associated to $P$ and a codimension two subspace}
In this section, we analyze the codimension--$2$ associated primes of
$N$ in greater depth. In the simplicial, 
planar case, the Alfeld-Schumaker formula shows that there is a
contribution to the constant term of the Hilbert polynomial from
the local behaviour at interior vertices. As noted in Example
\ref{exm:syz}, a syzygy on the linear forms adjacent to a single
fixed vertex corresponds to a loop around that vertex.

\pagebreak
\begin{exm}\label{exm:polyII}
For the planar polyhedral complex $P$ appearing in Example \ref{exm:first}, 
let $v_i$ be the vertex associated to the face labelled $f_i$. The
dual graph $G(P)$ is:
\begin{figure}[h]
\begin{center}
\epsfig{file=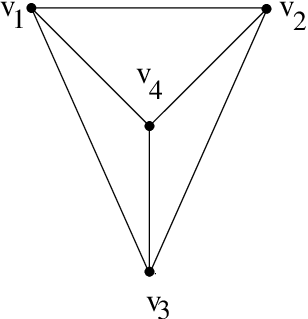,height=1.3in,width=1.5in}
\end{center}
\end{figure}
\end{exm}

\noindent In \cite{r2}, Rose shows that if the 
dual graph $G_P$ of $P$ has a basis
of disjoint cycles, then the projective dimension and Hilbert series
of $\CR$ are determined by the case when $G_P$ has
a single cycle, which Rose analyzed in \cite{r1}. We now
define a refined version of dual graph, which depends on 
$P$ and the choice of a codimension--$2$ linear subspace.
\begin{defn}\label{defn:dualG2}
Let $P$ be a $d$--dimensional polyhedral complex embedded in
$\mathbb{R}^d$, and $\xi$ a
codimension--$2$ linear subspace. $G_\xi(P)$ is a graph
whose vertices correspond to those $\sigma \in P_d$ such 
there exists a $(d-1)$--face of $\sigma$ whose linear span contains
$\xi$. Two vertices of $G_\xi(P)$ are joined iff
the corresponding $d$--faces share a common $(d-1)$--face whose linear
span contains $\xi$.
\end{defn}
\begin{exm}\label{exm:polyIII}
We continue to analyze the complex $P$ of Example \ref{exm:first}. 
For each interior vertex $v$ of $P$, $G_v(P)$ consists of 
a triangle. However, there is another codimension--$2$ 
space to consider: if $\xi$ is the
point at which (the linear hulls of) the three edges connecting
interior vertices to boundary vertices meet, then $G_\xi(P)$ is 

\begin{figure}[h]
\begin{center}
\epsfig{file=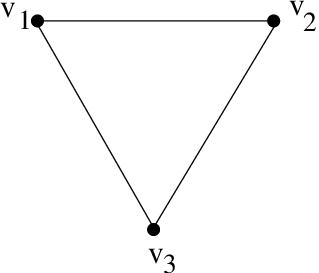,height=1.2in,width=1.4in}
\end{center}
\end{figure}

Let $P'$ be obtained by moving the top vertex of $P$ a bit to the
right. Then $P$ and $P'$ are combinatorially equivalent, but in $P'$
there are no sets of $\ge 3$ concurrent $V(l_\tau), \tau \in
P^0_{d-1}$, except at the interior vertices. In particular, $G_\xi(P')$ is acyclic. 
\end{exm}

\begin{lem}\label{lem:main1}
For any $\sigma \in P_d$, there are at most two facets of $\sigma$ 
whose linear spans contain a given codimension--$2$ linear space $\xi$. 
\end{lem}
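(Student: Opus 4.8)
The plan is to argue by elementary linear algebra in $\mathbb{R}^d$, using the fact that $\sigma$ is a full-dimensional polytope and that the facets of $\sigma$ are genuine $(d-1)$-dimensional faces, so each facet has a well-defined linear span of dimension exactly $d-1$.

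First I would set up notation: let $\xi$ be a codimension-$2$ linear subspace, so $\dim \xi = d-2$, and suppose for contradiction that $\sigma$ has three distinct facets $\tau_1, \tau_2, \tau_3$ whose linear spans $W_i = \mathrm{span}(\tau_i)$ each contain $\xi$. Each $W_i$ is a hyperplane through the origin, hence of the form $V(l_i)$ for a linear form $l_i$, and the condition $\xi \subseteq W_i$ for all $i$ means $\xi \subseteq W_1 \cap W_2 \cap W_3$. Since the $W_i$ are distinct hyperplanes containing the codimension-$2$ space $\xi$, the pencil of hyperplanes through $\xi$ is one-dimensional, so any two of them already intersect in $\xi$; thus $W_1 \cap W_2 \cap W_3 = \xi$, and in particular the three linear forms $l_1, l_2, l_3$ all vanish on $\xi$ and pairwise span the $2$-dimensional quotient space $\mathbb{R}^d / \xi$.

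The key step is to derive a contradiction with $\sigma$ being a full-dimensional polytope. Passing to the quotient $\mathbb{R}^d / \xi \cong \mathbb{R}^2$, the images of the three hyperplanes $W_i$ are three distinct lines through the origin. The facet $\tau_i$ lies in the affine hyperplane $\mathrm{aff}(\tau_i)$, which is a translate of $W_i$; but since we have taken $\xi$ to be a \emph{linear} subspace and the $W_i$ are the \emph{linear} spans, we actually have $\tau_i \subseteq W_i$, so the image of $\tau_i$ in the quotient lies on a single line $\ell_i$. Now $\sigma \subseteq \mathbb{R}^d$ is full-dimensional, so its image $\bar\sigma$ in $\mathbb{R}^2$ is a $2$-dimensional polytope, and each $\tau_i$ maps into $\partial\bar\sigma$ along the line $\ell_i$ through the origin. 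A convex polygon $\bar\sigma$ can meet a line through a fixed interior-or-boundary point $0$ in at most — here one must be slightly careful: the three lines $\ell_i$ are concurrent at the origin, and a convex polygon has boundary meeting any line in at most two points or one edge; but three distinct concurrent lines cannot each support an edge of a single convex polygon unless the origin is a vertex of $\bar\sigma$, in which case at most two edges emanate from it. Either way we get that at most two of the $\tau_i$ can actually be facets, contradicting the existence of three.

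I expect the main obstacle to be the last step: making precise the claim that a convex $2$-polytope cannot have three facets (edges) lying on three distinct concurrent lines. The cleanest route is probably to observe that if $\bar\sigma$ has edges on lines $\ell_1, \ell_2, \ell_3$ through $0$, then $0$ lies on the boundary of $\bar\sigma$ (as each $\ell_i$ meets the relative interior of an edge or is disjoint from $\mathrm{int}\,\bar\sigma$); if $0 \in \mathrm{relint}(\text{edge } e_1)$ then $\ell_1$ contains $e_1$ and the other two lines $\ell_2, \ell_3$ pass through an interior point of $e_1$, hence cross into $\mathrm{int}\,\bar\sigma$, so they cannot support edges — contradiction; if $0$ is a vertex, only the two edges at that vertex can lie on lines through $0$. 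Alternatively, and perhaps more simply for the polyhedral (possibly unbounded) setting the authors work in, one argues directly with the linear forms: $l_1, l_2, l_3$ are pairwise linearly independent modulo $\xi$, so in the $2$-dimensional space $\mathbb{R}^d/\xi$ one of them, say $l_3$, is a linear combination $l_3 \equiv \alpha l_1 + \beta l_2$ with $\alpha,\beta \neq 0$; since $\sigma$ lies on one side of each $V(l_i)$, say $l_1 \geq 0$ and $l_2 \geq 0$ on $\sigma$ after sign normalization, the sign of $l_3 = \alpha l_1 + \beta l_2$ on $\sigma$ is forced unless $\alpha$ and $\beta$ have opposite signs, and a short case analysis on the signs shows $V(l_3) \cap \sigma$ cannot be a facet (it is either empty-interior or forces $\sigma$ into a lower-dimensional set). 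I would write the argument in whichever of these two forms is shortest given the conventions already fixed in the paper.
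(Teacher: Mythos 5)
Your overall strategy is the same as the paper's: the published proof is exactly the convexity/wedge argument --- $\sigma$ lies on one side of each $V(l_{\tau_i})$, hence between $V(l_{\tau_1})$ and $V(l_{\tau_2})$, and a third hyperplane through $\xi=V(l_{\tau_1})\cap V(l_{\tau_2})$ would have to split $\sigma$. Both your projection to $\mathbb{R}^d/\xi\cong\mathbb{R}^2$ and your sign analysis of $l_3=\alpha l_1+\beta l_2$ are ways of making that one-line argument precise, so there is no difference in method. However, two specific steps do not hold as written. In the projection route you assert that the concurrence point $0$ lies on $\partial\bar\sigma$, justified by ``each $\ell_i$ meets the relative interior of an edge or is disjoint from $\mathrm{int}\,\bar\sigma$''; this does not establish $0\in\bar\sigma$, and nothing forces it: a convex polygon can have edges on two concurrent supporting lines without containing their intersection point, so your case analysis ($0$ in the relative interior of an edge, or $0$ a vertex) omits the case $0\notin\bar\sigma$ and the step as justified fails. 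The clean repair stays inside your own picture: $\bar\sigma$ lies in the cone $C=H_1\cap H_2\cap H_3$ with apex $0$, each image $\pi(\tau_i)$ is a nondegenerate segment (since $\dim\tau_i=d-1$ and $\xi\subseteq\mathrm{span}(\tau_i)$) contained in $\ell_i\cap C\subseteq\partial C$, and $\partial C$ consists of at most two rays from $0$, so the three segments would lie on at most two distinct lines, a contradiction.

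In the sign-analysis route, the same-sign case is fine ($V(l_3)\cap\sigma\subseteq\xi$, too small to be a facet), but your parenthetical for the opposite-sign case (``it is either empty-interior or forces $\sigma$ into a lower-dimensional set'') is not the right conclusion: with $\alpha>0>\beta$ and $\sigma\subseteq\{l_3\ge 0\}$, the set $\{l_1\ge0,\,l_2\ge0,\,l_3\ge0\}$ is still a full-dimensional wedge. The actual contradiction is that one of the \emph{other} facets collapses: a relatively interior point $p$ of $\tau_1$ has $l_1(p)=0$ and $l_2(p)>0$, hence $l_3(p)=\beta l_2(p)<0$, violating $\sigma\subseteq\{l_3\ge0\}$ (equivalently, $\tau_1$ would be forced into $\xi$). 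With either of these repairs your argument is complete and coincides with the paper's proof.
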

\begin{proof}
Suppose the linear spans of three facets $\tau_1,\tau_2,\tau_3$ of $\sigma$
meet in a codimension--$2$ linear space $\xi$. 
For each $V(l_{\tau_i})$, $\sigma$ lies on one 
side of the hyperplane; so $\sigma$ lies between $V(l_{\tau_1})$ 
and $V(l_{\tau_2})$. Since $V(l_{\tau_3})$ contains 
\[
\xi =V(l_{\tau_1}) \cap V(l_{\tau_2}),
\]
this means 
$V(l_{\tau_3})$ would split $\sigma$, a contradiction.
\end{proof}
\begin{cor}\label{cor:cor1}
$G_{\xi}(P)$ is homotopic to a disjoint union of circles and segments.
\end{cor}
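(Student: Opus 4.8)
The plan is to deduce the corollary directly from Lemma~\ref{lem:main1}, which is the structural input that controls the local geometry of $G_\xi(P)$ at each vertex. First I would observe that by Lemma~\ref{lem:main1}, any $\sigma \in P_d$ has at most two facets whose linear span contains $\xi$; by the definition of $G_\xi(P)$ (Definition~\ref{defn:dualG2}), the edges of $G_\xi(P)$ incident to the vertex $\sigma$ are exactly those coming from such facets of $\sigma$ (more precisely, those facets that are shared with an adjacent $d$-face and whose span contains $\xi$). Hence every vertex of $G_\xi(P)$ has degree at most two.

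Next I would invoke the elementary fact from graph theory that a graph in which every vertex has degree $\le 2$ is a disjoint union of simple paths (segments), isolated vertices, and simple cycles (circles). Each connected component is therefore either a single point, a segment, or a circle. Passing to the topological realization, a point and a segment are both contractible, so up to homotopy each such component is a point or a circle; a cycle of length $n$ realizes as a circle. Thus the geometric realization of $G_\xi(P)$ is homotopy equivalent to a disjoint union of circles and points, and a point is itself the realization of a (degenerate) segment, so one can phrase the conclusion as ``a disjoint union of circles and segments'' as stated.

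The one subtlety to be careful about — and the only place where the argument needs more than the bare degree bound — is ruling out multi-edges and loops, so that the component is genuinely a \emph{simple} path or cycle rather than, say, a doubled edge. A loop at $\sigma$ cannot occur because an edge of $G_\xi(P)$ joins two \emph{distinct} $d$-faces sharing a common facet. A double edge between $\sigma_1$ and $\sigma_2$ would require $\sigma_1$ and $\sigma_2$ to share two distinct common $(d-1)$-faces each with span containing $\xi$; since in a polyhedral complex two $d$-cells meet along at most one $(d-1)$-face, this is impossible. Given these two observations, the degree bound alone forces the desired structure, and the homotopy statement follows immediately.

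I do not anticipate any serious obstacle: the content is entirely carried by Lemma~\ref{lem:main1}, and the remainder is the standard classification of degree-$\le 2$ graphs together with the trivial homotopy type of paths. The only thing requiring a sentence of care is the no-multiedge remark above, which uses the defining properties of a polyhedral complex rather than anything about $\xi$.
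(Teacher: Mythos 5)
Your argument is correct and is essentially the paper's own proof: Corollary~\ref{cor:cor1} is deduced there in one line from Lemma~\ref{lem:main1}, namely that every vertex of $G_{\xi}(P)$ has valence at most two, with the classification of degree-$\le 2$ graphs left implicit. Your additional remarks ruling out loops and multi-edges are a harmless elaboration of the same approach, not a different route.
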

\begin{proof}
By Lemma \ref{lem:main1}, the valence of any 
vertex $v \in G_{\xi}(P)$ is at most two.
\end{proof}

\begin{thm}\label{thm:main3}
For a polyhedral complex $P$ and codimension--$2$ linear prime $\xi$,
\[
N_{I(\xi)} \simeq \!\!\!\!\!\! \bigoplus\limits_{\psi \in H_1(G_{\xi}(P))}\!\!\!\!\!\! (R/I_\psi)_{I(\xi)}
\]
where $\psi \in H_1(G_{\xi}(P))$ means $\psi$ is a component of
$G_{\xi}(P)$ homotopic to $S^1$, and 
\[
I_\psi = \langle l_\tau^{r+1} \mid \tau \in P^0_{d-1}
\mbox{ corresponds to an edge of } \psi \rangle.
\]
\end{thm}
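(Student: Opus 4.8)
The plan is to analyze the matrix $\phi = [\partial_d \mid D]$ after localizing at $I(\xi)$ and show that it decomposes, up to invertible maps, according to the components of $G_\xi(P)$. First I would use Lemma~\ref{lem:main2}: any $\sigma \in P_d$ which is \emph{not} a vertex of $G_\xi(P)$ (i.e.\ has at most one facet whose linear span contains $\xi$) contributes generators that die in $N_{I(\xi)}$, and likewise any $\tau \in P^0_{d-1}$ with $\xi \not\subseteq V(l_\tau)$ has $l_\tau$ becoming a unit, so the corresponding column of $D$ can be used to eliminate the generator it hits. After performing these eliminations, the localized presentation of $N_{I(\xi)}$ involves only the $d$--faces $\sigma$ that are vertices of $G_\xi(P)$ and the interior $(d-1)$--faces $\tau$ with $\xi \subseteq V(l_\tau)$, which are exactly the edges of $G_\xi(P)$. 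So $N_{I(\xi)}$ is presented over $R_{I(\xi)}$ by the incidence data of $G_\xi(P)$ together with the diagonal relations $l_\tau^{r+1}$.

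Next I would observe that this reduced presentation splits as a direct sum over the connected components of $G_\xi(P)$, since no relation or generator mixes distinct components. By Corollary~\ref{cor:cor1} each component is either a segment (a path) or a circle. For a path component, I would show the contribution to $N_{I(\xi)}$ vanishes: order the $d$--faces along the path $\sigma_1, \dots, \sigma_n$, so the edges are $\tau_1, \dots, \tau_{n-1}$; the $\partial_d$ part gives relations $e_{\sigma_i} - e_{\sigma_{i+1}} \equiv 0$ modulo the image of the $l_{\tau_i}^{r+1}$ columns, but the endpoint $\sigma_1$ has only one incident edge in $G_\xi(P)$ — its other facets do not contain $\xi$ — so by the argument of Lemma~\ref{lem:main2} the generator $e_{\sigma_1}$ is already zero in the localization; propagating down the path forces every $e_{\sigma_i}$ to zero. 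Hence only the circle components survive, which accounts for the indexing set $H_1(G_\xi(P))$.

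For a circle component $\psi$ with $d$--faces $\sigma_1, \dots, \sigma_n$ cyclically arranged and edges $\tau_1, \dots, \tau_n$, the surviving part of the presentation has generators $e_{\sigma_1}, \dots, e_{\sigma_n}$, the relations $e_{\sigma_i} \equiv e_{\sigma_{i+1}}$ (indices mod $n$) coming from $\partial_d$, and the relations $l_{\tau_i}^{r+1} e_{\sigma_i} \equiv 0$ from $D$. Setting $f := e_{\sigma_1} = \cdots = e_{\sigma_n}$, the module is generated by the single element $f$ with relations $l_{\tau_i}^{r+1} f = 0$ for all $i$, i.e.\ it is $R_{I(\xi)}/\langle l_{\tau_1}^{r+1}, \dots, l_{\tau_n}^{r+1}\rangle = (R/I_\psi)_{I(\xi)}$. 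Summing over all circle components yields the stated isomorphism.

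The main obstacle I anticipate is making the ``Gaussian elimination'' step fully rigorous: one must check that using the unit entries $l_\tau^{r+1}$ (for $\tau \notin$ edges of $G_\xi(P)$) and the unit off-$G_\xi(P)$ structure to clear generators does not accidentally introduce new relations among the surviving generators, and that the endpoint-vanishing argument for path components interacts correctly with the $\pm 1$ sign pattern of $\partial_d$. Concretely, one needs that after localization the submatrix of $\partial_d$ restricted to the surviving faces really is the (reduced) incidence matrix of $G_\xi(P)$ and nothing more — this uses Lemma~\ref{lem:main1} (valence $\le 2$) crucially, since it guarantees each surviving $\sigma$ meets at most two surviving $\tau$'s, so the columns of $\partial_d$ genuinely look like path/cycle incidence columns. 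Once that bookkeeping is pinned down, the decomposition into the $(R/I_\psi)_{I(\xi)}$ is immediate.
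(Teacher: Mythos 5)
Your proposal takes essentially the same route as the paper: localize at $I(\xi)$, invert every $l_\tau$ with $\xi\not\subseteq V(l_\tau)$, use Lemma~\ref{lem:main2} together with Corollary~\ref{cor:cor1} to kill everything supported on segment components of $G_\xi(P)$, and observe that each cycle collapses to a single generator annihilated by the $l_\tau^{r+1}$ along its edges. Your explicit endpoint-plus-propagation argument for a path component is in fact slightly more careful than the paper's one-line appeal to Lemma~\ref{lem:main2}, which applies directly only to the two endpoint $d$--faces of the segment.

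One bookkeeping correction: in the Billera--Rose presentation (Lemma~\ref{lem:BR}), $N$ is a quotient of $R^{f_{d-1}^0}$, so its generators are indexed by the interior $(d-1)$--faces $\tau$ (the rows of $\phi$), while the columns of $\partial_d$, indexed by the $d$--faces $\sigma$, are relations. Your second and third paragraphs transpose this: you take generators $e_{\sigma_i}$ with one identification per edge, whereas the correct reduced presentation on a cycle has generators $e_{\tau_1},\dots,e_{\tau_n}$, one relation $\pm(e_{\tau_{i-1}}-e_{\tau_i})=0$ per vertex $\sigma_i$ (the $\sigma_i$-column of $\partial_d$ after clearing the unit entries of $D$), and $l_{\tau_i}^{r+1}e_{\tau_i}=0$ from $D$. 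Because the vertex--edge incidence of a cycle (and, with the endpoint argument, of a segment) is symmetric under this swap, your computation still lands on $(R/I_\psi)_{I(\xi)}$ for cycles and $0$ for segments, so the slip does not affect the conclusion; but the writeup should be phrased in terms of the $e_\tau$'s, exactly as your own first paragraph (where the $D$--columns eliminate the generators they hit) already sets it up.
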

\begin{proof}
By Corollary \ref{cor:cor1}, $G_{\xi}(P)$ consists
of a disjoint union of 
cycles and segments. By Lemma \ref{lem:main2}, all generators of
$N$ which lie in a segment are mapped to zero in the localization $N_{I(\xi)}$.
For each $d-$face $\sigma$ corresponding to a vertex in a cycle, note that
there are two $(d-1)-$faces $\tau_1$, $\tau_2$ of $\sigma$ such that 
$l_{\tau_1},l_{\tau_2}$ are not units in $R_{I(\xi)}$; every other
linear form defining a facet of $\sigma$ becomes a unit. Reducing the
column of $\partial_d$ corresponding to $\sigma$ by the columns of $D_{I(\xi)}$
having a unit entry gives a column with nonzero entries only in rows
corresponding to $\tau_1$ and $\tau_2$. Repeating the process shows
that the cycle corresponds to a principal submodule of $N_{I({\xi)}}$, 
with the generator quotiented by the $(r+1)^{st}$ powers of the forms
corresponding to the edges of the cycle.
\end{proof}

\begin{prop}\label{prop:ses}
Let $\mathcal{P}$ be the set of all codimension--$2$ associated primes 
of $N$. Then there is an exact sequence
\[
0 \longrightarrow K \longrightarrow N  \longrightarrow  \!\!\! \!\!\!  \bigoplus\limits_{\stackrel{\psi \in H_1(G_{V(Q)}(P))}{Q \in \mathcal{P}}}  \!\!\! \!\!\! R/I_\psi \longrightarrow C \longrightarrow 0,
\]
where $K$ and $C$ are supported in codimension at least three.
\end{prop}
\begin{proof}
The reasoning in the proof of Theorem~\ref{thm:main3} shows that if 
$\xi = V(Q)$ with $Q \in \mathcal{P}$, then 
\[
\bigoplus\limits_{\psi \in H_1(G_{V(Q)}(P))} \!\!\!\!\!\! R/I_\psi
\]
is exactly
the cokernel of the submatrix of $[\partial_d \mid D ]$ obtained by
deleting those rows indexed by $\tau \in P_{d-1}^0$ such that 
$\xi \not \in \mbox{conv}(\tau)$. An application of the snake lemma
then shows that 
\[
N \longrightarrow \bigoplus\limits_{\psi \in H_1(G_{\xi}(P))} \!\!\!\!\! R/I_\psi  \longrightarrow 0.
\]
is exact. Taking the sum of such maps over all $Q \in \mathcal{P}$ yields
the exact sequence of the proposition. Theorem~\ref{thm:main3} shows
that upon localizing this sequence at any prime $Q \in \mathcal{P}$, 
the localizations $C_{Q}$ and $K_{Q}$ vanish, hence $K$ and $C$ are supported in 
codimension at least three.
\end{proof}

\begin{cor}\label{cor:HP}
Let $M$ be a graded module, and let $a_{d-2}(M)$ denote the
coefficient of $k^{d-2}$ in $HP(M,k)$. Let $\mathcal{P}$ be 
the set of codimension--$2$ primes associated to $N$. 
Then $a_{d-2}(\CR) =$
\[
a_{d-2}(R^{f_d-f_{d-1}^0})+a_{d-2}(R(-r-1)^{f_{d-1}^0})+\sum\limits_{Q
  \in \mathcal{P}} \sum\limits_{\psi_j \in H_1(G_{V(Q)}(P))}
a_{d-2}(R/I_{\psi_j}).
\]
\end{cor}
\begin{proof}
By Lemma~\ref{lem:BR} and the additivity of Hilbert polynomials on 
exact sequences, 
\[
a_{d-2}(\CR) =
a_{d-2}(R^{f_d-f_{d-1}^0})+a_{d-2}(R(-r-1)^{f_{d-1}^0})+a_{d-2}(N).
\]
By Proposition~\ref{prop:ses},
\[
a_{d-2}(N) = \sum\limits_{Q \in \mathcal{P}}\sum\limits_{\psi_j \in H_1(G_{V(Q)}(P))} a_{d-2}(R/I_{\psi_j}),
\]
and the result follows. 
\end{proof}
The value of $a_{d-2}(R(-m))$ is a Stirling number of the first 
kind, and can be written out explicitly. In order for Corollary~\ref{cor:HP} to be useful, we need to know $a_{d-2}(R/I_\psi)$,
which is provided by the following lemma: 

\begin{lem}\label{lem:syzlines}
Let $I_\psi = \langle l_1^{r+1},\ldots,l_n^{r+1} \rangle \subseteq
\mathbb{K}[x_0,\ldots,x_d]$ be a codimension--$2$ ideal, minimally generated
by the $n$ given elements. Define
\[
\begin{array}{ccc}\alpha(\psi) & =&\lfloor\frac{r+1}{n-1}\rfloor,\\
 s_1(\psi)&=&(n\!-\!1)\alpha(\psi)\!+\!n\!-\!r\!-\!2,\\ 
s_2(\psi)&=&r\!+\!1\!-\!(n\!-\!1)\alpha(\psi).
\end{array}
\]
Then the minimal free resolution of $R/I_\psi$ is:
\[
0  \longrightarrow
\begin{array}{c}
R(-r\!-\!1\!-\!\alpha(\psi))^{s_1(\psi)} \\
\oplus\\
R(-r\!-\!2\!-\!\alpha(\psi))^{s_2(\psi)} 
\end{array}
\!\!\! \longrightarrow
R(-r\!-\!1)^n
 \longrightarrow
R \longrightarrow
R/I_\psi \longrightarrow  0.
\]
\end{lem}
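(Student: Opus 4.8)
The plan is to show that $R/I_\psi$ has a linear resolution of the stated shape by a combination of ideal-theoretic reduction and the structure theory of codimension-two almost complete intersections. The central observation is this: although $I_\psi$ is generated by $n$ forms of degree $r+1$, after a linear change of coordinates we may assume the $l_i$ are all linear forms in the two variables $x_0, x_1$ (since $V(l_1,\ldots,l_n)$ has codimension two, and all the $l_i$ vanish on that codimension-two space, hence lie in the two-dimensional span of two of them). So the problem reduces to the case $R = \K[x_0,x_1]$ with $I_\psi = \langle l_1^{r+1},\ldots,l_n^{r+1}\rangle$, where $l_1,\ldots,l_n$ are pairwise non-proportional linear forms; the resolution of $R/I_\psi$ over the polynomial ring in $d+1$ variables is then obtained by tensoring with $\K[x_2,\ldots,x_d]$, which is flat and changes nothing in the graded Betti numbers.

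First I would reduce to the two-variable case as above, using that codimension two plus minimal generation by $n$ forms forces all $l_i$ into a common pencil. Next, in $\K[x_0,x_1]$, since $R/I_\psi$ is a zero-dimensional (hence Cohen-Macaulay of codimension two), Artinian quotient, the Hilbert-Burch theorem applies: the minimal free resolution has length two, the last free module has rank $n-1$, and the generators of $I_\psi$ are the maximal minors of the $(n-1)\times n$ presentation matrix. So the resolution necessarily has the form
\[
0 \longrightarrow \bigoplus_{i=1}^{n-1} R(-b_i) \longrightarrow R(-r-1)^n \longrightarrow R \longrightarrow R/I_\psi \longrightarrow 0,
\]
and the only thing left to determine is the multiset of syzygy degrees $\{b_1,\ldots,b_{n-1}\}$. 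The constraint from the Hilbert-Burch matrix is that the entries of the presentation matrix have degrees $b_i - (r+1) \ge 0$ and that $\sum b_i = n(r+1)$ (comparing the leading terms: the alternating sum of generator degrees must match). I would pin the $b_i$ down by computing the Hilbert function of $R/I_\psi$ directly. The key input is that $I_\psi$, being a power of a product of distinct linear forms in two variables in the sense of ideals of "fat points on a line," has a well-understood Hilbert function: $R/I_\psi$ has finite length, and one computes $\dim_\K (R/I_\psi)_k$ from the fact that $(l_1^{r+1},\ldots,l_n^{r+1})_k = R_k$ once $k$ is large enough, with the threshold governed by $\alpha(\psi) = \lfloor (r+1)/(n-1)\rfloor$. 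Matching the second difference of this Hilbert function against the alternating sum $\binom{k+1}{1} - n\binom{k-r}{1} + \sum_i \binom{k+1-b_i}{1}$ forces exactly $s_1(\psi)$ of the $b_i$ to equal $r+1+\alpha(\psi)$ and the remaining $s_2(\psi)$ to equal $r+2+\alpha(\psi)$ (and one checks $s_1 + s_2 = n-1$ and $s_1(r+1+\alpha) + s_2(r+2+\alpha) = n(r+1)$, which are exactly the numerical identities implicit in the definitions).

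The main obstacle, and the step requiring the most care, is the exact determination of the Hilbert function of $R/I_\psi$ — equivalently, showing that the syzygies are as balanced as possible (all in two consecutive degrees). For this I would either invoke the known description of resolutions of ideals generated by powers of linear forms in two variables (these are "generic" in the relevant sense once the forms are pairwise distinct — the resolution is as close to linear/pure as the numerics permit), or argue directly: the module of syzygies on $l_1^{r+1},\ldots,l_n^{r+1}$ in $\K[x_0,x_1]$ is a rank-$(n-1)$ graded free module, and one shows it is generated in the two degrees $r+1+\alpha(\psi)$ and $r+2+\alpha(\psi)$ by an explicit dimension count of the syzygy module in each degree — the space of degree-$j$ syzygies has dimension $\binom{j+1}{1}\cdot(\text{something})$ that one reads off from the surjectivity threshold. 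Once the generator degrees of the syzygy module are shown to lie in these two consecutive values, minimality of the resolution and the rank count from Hilbert-Burch force the multiplicities to be exactly $s_1(\psi)$ and $s_2(\psi)$. Finally, tensoring back up to $\K[x_0,\ldots,x_d]$ preserves the resolution verbatim, completing the proof.
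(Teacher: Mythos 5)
Your proposal is essentially correct, and it is worth saying how it sits relative to the paper: the paper's own ``proof'' is a one-line citation to Theorem 3.1 of Schenck--Stillman, whose key ingredient is Schumaker's result that a certain matrix has full rank, whereas you reconstruct an actual argument. Your packaging is slightly different and has some merit: the observation that codimension two forces all the $l_i$ into a pencil, so that after a change of coordinates $I_\psi$ is extended from $\mathbb{K}[x_0,x_1]$ and the resolution is obtained by flat base change, is a clean reduction that the paper does not make explicit; from there Hilbert--Burch (or just Auslander--Buchsbaum plus the rank count) gives the shape of the resolution, and your numerical bookkeeping ($s_1+s_2=n-1$, $s_1(r+1+\alpha)+s_2(r+2+\alpha)=n(r+1)$, both nonnegative by the definition of $\alpha$) is right. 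The one soft spot is exactly the step you flag as the main obstacle: the claim that the syzygy degrees sit in the two consecutive values $r+1+\alpha$, $r+2+\alpha$ is equivalent to the statement that pairwise non-proportional $(r+1)$-st powers of linear forms in two variables have the generic Hilbert function, i.e.\ $\dim(R/I_\psi)_k=\max\{0,\,k+1-n(k-r)\}$, and this is precisely the nontrivial content that the paper outsources to Schumaker. Your ``argue directly \ldots read off from the surjectivity threshold'' version is circular as stated, since the surjectivity threshold is what needs proving; but your alternative of invoking the known description is legitimate (and in two variables it can be made self-contained by apolarity: the perp of $l_i^{r+1}$ in degree $k$ consists of forms divisible by $(l_i^{\perp})^{k-r}$, and since the $l_i^{\perp}$ are pairwise coprime the intersection is the multiples of their product, giving the dimension count at once). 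So your route is sound and, at its crux, rests on the same external fact the paper cites; making the apolarity remark explicit would close the only gap.
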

\begin{proof}
See Theorem 3.1 of \cite{ss1}; the key step involves showing that
a certain matrix has full rank, which was established by Schumaker in
\cite{schu}.
\end{proof}
\noindent It follows from Lemma \ref{lem:syzlines} that 
the Hilbert polynomial of $R/I_\psi$ is given by:

\[
\binom{k\!+\!d}{d}-n\binom{k\!+\!d\!-\!r\!-\!1}{d}  +
s_1(\psi)\binom{k\!+\!d\!-\!r\!-\!1\!-\!\alpha(\psi)}{d} +
s_2(\psi)\binom{k\!+\!d\!-\!r\!-\!2\!-\!\alpha(\psi)}{d}.
\]
\vskip .05in
\begin{cor}\label{cor:planarHP}
If $P$ is a hereditary planar polyhedral complex, then 
\[
HP(\CR,k) = \frac{f_2}{2}k^2 + \frac{3f_2-2(r+1)f_1^0}{2}k + f_2+ \Big({r
  \choose 2}-1\Big)f_1^0 + \!\!\! \!\!\! \sum\limits_{\psi_j \in H_1(G_{\xi_i}(P))}\!\!\! c_j,
\]
where
\[
c_j = 1-n(\psi_j){r \choose 2} + s_1(\psi_j){r + \alpha(\psi_j)
  \choose 2} + s_2(\psi_j){r + \alpha(\psi_j)+1 \choose 2} 
\]
\[
= {r+2 \choose 2}+\frac{\alpha(\psi_j)}{2}\Big(2r+3+\alpha(\psi_j)-n(1+\alpha(\psi_j))\Big).
\]
\end{cor}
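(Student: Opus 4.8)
The plan is to read off the three coefficients of this degree--$2$ polynomial by combining the Billera--Rose resolution with \thmref{thm:HP} and \corref{cor:assPrimes}, the work being elementary binomial bookkeeping since the substantive results are already in place. First I would specialize \lemref{lem:BR} to $d=2$, where $R=\mathbb{K}[x_0,x_1,x_2]$ is the three--dimensional coordinate ring of the cone $\hat P$. Additivity of the Hilbert polynomial on that exact sequence gives
\[
HP(\CR,k)=f_2\binom{k+2}{2}+f_1^0\binom{k-r+1}{2}-f_1^0\binom{k+2}{2}+HP(N,k).
\]
Expanding $\binom{k+2}{2}=\tfrac12(k^2+3k+2)$ and $\binom{k-r+1}{2}=\tfrac12(k^2+(1-2r)k+r^2-r)$ and collecting terms yields the $k^2$ coefficient $f_2/2$, the $k^1$ coefficient $(3f_2-2(r+1)f_1^0)/2$, and the part of the constant term independent of $N$, namely $f_2+(\binom r2-1)f_1^0$ (using $\binom{1-r}{2}=\binom r2$).

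The remaining task is to show that $HP(N,k)$ equals the constant $\sum_i\sum_j c_j$. Since $N$ is supported in codimension at least two in the three--dimensional ring $R$ (\cite{br}; see also the remark after \thmref{thm:linear2}), $N$ has Krull dimension $\le 1$ and so $HP(N,k)$ is a constant. I would then apply \propref{prop:codim} with $m=1$: the point worth making explicit is that the kernel $K$ and cokernel $C$ in its proof are then supported in codimension $>2$, hence have finite length, hence have identically zero Hilbert polynomial, so in fact $HP(N,k)=\sum_{Q_i\in\mathcal P}HP(N(Q_i),k)$ exactly (not merely to leading order). By \corref{cor:assPrimes}, $N(Q_i)\cong\bigoplus_{\psi_j\in H_1(G_{\xi_i}(P))}R/I_{\psi_j}$, so $HP(N,k)=\sum_i\sum_j HP(R/I_{\psi_j},k)$.

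Finally I would compute $HP(R/I_{\psi_j},k)$ from the minimal free resolution of \lemref{lem:syzlines} with $d=2$, obtaining the alternating sum of binomials recorded just after that lemma. Writing $n=n(\psi_j)$, $\alpha=\alpha(\psi_j)$, $s_i=s_i(\psi_j)$, the identity $s_1+s_2=n-1$ kills the $k^2$ and $k^1$ terms, leaving the constant $c_j=1-n\binom r2+s_1\binom{r+\alpha}{2}+s_2\binom{r+\alpha+1}{2}$; substituting $s_1=(n-1)\alpha+n-r-2$, $s_2=r+1-(n-1)\alpha$ and simplifying (using $r^2+3r+2=2\binom{r+2}{2}$) produces the closed form $\binom{r+2}{2}+\tfrac{\alpha}{2}\big(2r+3+\alpha-n(1+\alpha)\big)$. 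Assembling this with the first paragraph gives the stated formula for $HP(\CR,k)$. I expect no genuine obstacle here; the one point requiring care is the finite--length observation of the second paragraph, which is exactly what upgrades the contribution of $N$ to the constant term from asymptotic to exact, together with the short piece of algebra identifying the two forms of $c_j$.
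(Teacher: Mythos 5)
Your proposal is correct and follows exactly the route the paper intends: specialize the Billera--Rose sequence of \lemref{lem:BR} to $d=2$ and use additivity for the two leading coefficients, then feed \propref{prop:codim}, \corref{cor:assPrimes} (equivalently \thmref{thm:HP} with $a_{d-2}=a_0$), and the Hilbert polynomial from \lemref{lem:syzlines} into the constant term; your binomial bookkeeping for the two forms of $c_j$ checks out. The finite-length remark you highlight is a fair point of care, though in the planar case it is already implicit in \thmref{thm:HP}, since the constant term is the only term of $HP(N,k)$ there.
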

\section{Examples and connection to simplicial case}
We close with some examples, and a discussion of the relation
to the simplicial case. We begin by applying Corollary 
\ref{cor:planarHP} to Example
\ref{exm:first}. As we saw in Example \ref{exm:polyIII}, there
are four $\xi$ at which $H_1(G_\xi(P)) \ne 0$, and each 
$I_\psi$ has three generators. Hence 
the $c_j$ are all the same, and equal to 
\[
{r+2 \choose
  2}+\frac{\alpha(\psi_j)}{2}\Big(2r+3+\alpha(\psi_j)-3(1+\alpha(\psi_j))\Big).
\]
which simplifies to 
\[
 {r+2 \choose 2}+
 \lfloor\frac{r+1}{2}\rfloor\Big(r-\lfloor\frac{r+1}{2}\rfloor \Big).
\]
Applying Corollary \ref{cor:planarHP} yields:
\begin{center}
\begin{supertabular}{|c|c|c|c|c|}
\hline $r$ & $\dim_{\mathbb{R}} C^r_k(P)$ & $ \frac{f_2}{2}k^2 + \frac{3f_2-2(r+1)f_1^0}{2}k$ & $f_2+ \Big({r
  \choose 2}-1\Big)f_1^0$ & $4({r+2 \choose 2}+ \alpha(r-\alpha))$\\
\hline $0$ & $2k^2+2$       & $2k^2$     & $-2$ & $4$\\
\hline $1$ & $2k^2-6k+10$   & $2k^2-6k$  & $-2$ & $12$\\
\hline $2$ & $2k^2-12k+32$  & $2k^2-12k$ & $4 $ & $28$\\
\hline $3$ & $2k^2-18k+64$  & $2k^2-18k$ & $16$ & $48$\\
\hline $4$ & $2k^2-24k+110$ & $2k^2-24k$ & $34$ & $76$\\
\hline
\end{supertabular}
\end{center}
\vskip .4in
As in Example \ref{exm:polyIII}, consider the configuration 
$P'$ obtained by perturbing a vertex in Example \ref{exm:first} 
so the three edges defining $\xi$ no longer meet. Then there
are only three nontrivial $c_j$, and we have

\begin{center}
\begin{supertabular}{|c|c|c|c|c|}
\hline $r$ & $\dim_{\mathbb{R}} C^r_k(P')$ & $ \frac{f_2}{2}k^2 + \frac{3f_2-2(r+1)f_1^0}{2}k$ & $f_2+ \Big({r
  \choose 2}-1\Big)f_1^0$ & $3({r+2 \choose 2}+ \alpha(r-\alpha))$\\
\hline $0$ & $2k^2+1$       & $2k^2$     & $-2$ & $3$\\
\hline $1$ & $2k^2-6k+7$   & $2k^2-6k$  & $-2$ & $9$\\
\hline $2$ & $2k^2-12k+25$  & $2k^2-12k$ & $4 $ & $21$\\
\hline $3$ & $2k^2-18k+52$  & $2k^2-18k$ & $16$ & $36$\\
\hline $4$ & $2k^2-24k+91$ & $2k^2-24k$ & $34$ & $57$\\
\hline
\end{supertabular}
\end{center}
\vskip .1in

\begin{exm}\label{exm:2squares}
In this example, we show that $G_{\xi}(P)$ can have several disjoint
cycles. Let $P$ be as below:

\begin{figure}[h]
\begin{center}
\epsfig{file=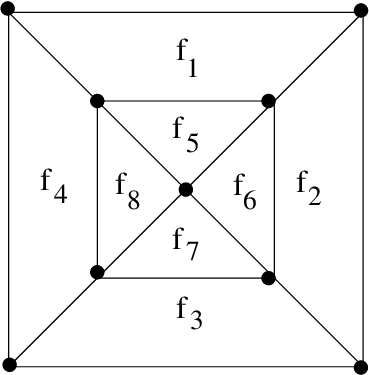,height=1.5in,width=1.5in}
\end{center}
\end{figure}

\noindent If $\xi$ corresponds to the central vertex, then $G_{\xi}(P)$ 
is:

\begin{figure}[h]
\begin{center}
\epsfig{file=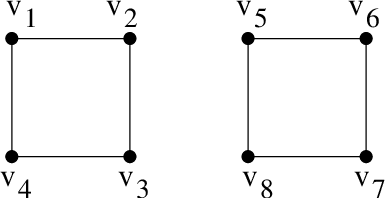,height=1.2in,width=2.0in}
\end{center}
\end{figure}
\noindent For these two cycles, $I_\psi$ has only 
two generators, and the $c_j$ value for such an ideal is always
$(r+1)^2$. There are four cycles for which $I_\psi$ has three generators, so 
by Corollary \ref{cor:planarHP} we have (this table omits column for 
$\frac{f_2}{2}k^2 + \frac{3f_2-2(r+1)f_1^0}{2}k$):
\vskip .1in
\begin{center}
\begin{supertabular}{|c|c|c|c|c|}
\hline $r$ & $\dim_{\mathbb{R}} C^r_k(P)$ &  $f_2+ \Big({r
  \choose 2}-1\Big)f_1^0$ & $4({r+2 \choose 2}+ \alpha(r-\alpha))$ & $2(r+1)^2$\\
\hline $0$ & $4k^2+2$       &  $-4$ & $4$  &$2$  \\
\hline $1$ & $4k^2-12k+16$   &  $-4$ & $12$ &$8$  \\
\hline $2$ & $4k^2-24k+54$  &  $8 $ & $28$ &$18$  \\
\hline $3$ & $4k^2-36k+112$  &  $32$ & $48$ &$32$  \\
\hline $4$ & $4k^2-48k+194$ &  $68$ & $76$ &$50$  \\
\hline
\end{supertabular}
\end{center}
\vskip .1in
\end{exm}

\begin{exm}\label{exm:bees}
In our final example, we look at a honeycomb configuration, that
is, the polyhedral complex consisting of seven hexagons, pictured
below:
\vskip .1in

\begin{figure}[h]
\begin{center}
\epsfig{file=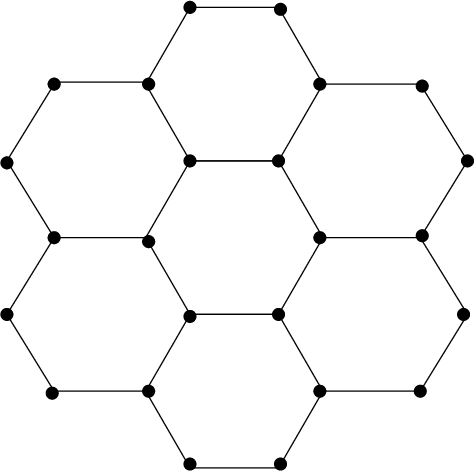,height=1.5in,width=1.7in}
\end{center}
\end{figure}

For each interior vertex $v$, $G_v(P)$ will be a triangle and a line
segment; while if $\xi$ is the point at the center of the diagram,
 $G_\xi(P)$ is a hexagon. However, each of the $I_\psi$ is generated
by three elements, and so 
Corollary \ref{cor:planarHP} yields:
\begin{center}
\begin{supertabular}{|c|c|c|c|c|}
\hline $r$ & $\dim_{\mathbb{R}} C^r_k(P)$ & $ \frac{f_2}{2}k^2 + \frac{3f_2-2(r+1)f_1^0}{2}k$ & $f_2+ \Big({r
  \choose 2}-1\Big)f_1^0$ & $7({r+2 \choose 2}+ \alpha(r-\alpha))$\\
\hline $0$ & $\frac{7}{2}k^2-\frac{3}{2}k    + 2$   & $\frac{7}{2}k^2-\frac{3}{2}k$ & $-5$ & $7$\\
\hline $1$ & $\frac{7}{2}k^2-\frac{27}{2}k   +16$   & $\frac{7}{2}k^2-\frac{27}{2}k$  & $-5$ & $21$\\
\hline $2$ & $\frac{7}{2}k^2-\frac{51}{2}k   +56$  & $\frac{7}{2}k^2-\frac{51}{2}k$ & $7 $ & $49$\\
\hline $3$ & $\frac{7}{2}k^2-\frac{75}{2}k   +115$  & $\frac{7}{2}k^2-\frac{75}{2}k$ & $31$ & $84$\\
\hline $4$ & $\frac{7}{2}k^2-\frac{99}{2}k   +200$  & $\frac{7}{2}k^2-\frac{99}{2}k$  & $67$ & $133$\\
\hline
\end{supertabular}
\end{center}
\vskip .1in
\end{exm}
\subsection{Connection to the simplicial case}
In the case where $P$ is actually a simplicial complex, the
first point to notice is that when two facets $\tau_1,\tau_2$ of 
$\sigma \in P_d$ meet, then they actually meet in a face of $\sigma$,
necessarily of codimension--$2$. Thus, in the simplicial case, 
the only $\xi$ such that $H_1(G_\xi(P)) \ne 0$ are $\xi \in
P^0_{d-2}$. For a planar $P$ which is simplicial, there are $f^0_0$
such faces. The formula for the value $c_j$ which appears in 
Corollary \ref{cor:planarHP} can be rewritten as:
\[
{r+2 \choose 2}+\sigma_i,
\]
where $\sigma_i$ is as in the Alfeld-Schumaker formula. So 
in the planar, simplicial case, the formula of Corollary 
\ref{cor:planarHP} is the polynomial appearing in Theorem 1.1.
\vskip .05in
\noindent {\bf Concluding Remarks and Questions}:  
\begin{enumerate}
\item To obtain the next coefficient of the Hilbert polynomial will
be difficult; indeed, even in the simplicial case no general formula
is known.
\item As noted in \cite{ss}, the value for which the dimension of 
$C^r_k(P)$ becomes polynomial is the Castelnuovo-Mumford 
regularity of the sheaf associated to $\CR$; it would be interesting to
determine the regularity of $\CR$.
\item It is possible to generalize these results to the case of
mixed smoothness, which was considered for planar simplicial complexes
in \cite{gs}; we leave this to the interested reader.
\item In the simplicial, planar case, \cite{ds} shows that $\CR \mbox{
  free} \rightarrow C^{r-1}(\hat P) \mbox{ free}$. Is this true for
  $P$ polyhedral?
\end{enumerate}
\vskip .05in
\noindent {\bf Acknowledgments}:  Macaulay2 \cite{danmike} computations were
essential to our work. 
\renewcommand{\baselinestretch}{1.0}
\small\normalsize 

\bibliographystyle{amsalpha}

\end{document}